\newcommand{\cP}{\mathcal{P}}
\newcommand\1{{\mathbf 1}}
\newcommand{\R}{\mathbb{R}}
\newcommand{\Dr}{\textnormal{Dr}}
\newcommand{\TGr}{\textnormal{TGr}}
\newcommand{\cl}{\textnormal{cl}}
\newcommand{\rank}{\textnormal{rank}}
\newcommand\conv{{\rm conv}}
\newcommand{\polymake}{\texttt{poly\-make}\xspace}
\definecolor{appcolor}{rgb}{0,0,0}
\definecolor{commentcolor}{rgb}{.3,.3,.3}
\definecolor{stringcolor}{RGB}{103,5,172}
\definecolor{typecolor}{RGB}{140,75,0}
\definecolor{propcolor}{RGB}{0,100,14}
\definecolor{light_gray}{gray}{0.7}
\lstdefinelanguage{pmshell}
{
  basicstyle=\small\ttfamily,
  keywords=[1]{Hypersurface,ValuatedMatroid,SubdivisionOfPoints},
  morekeywords=[2]{N_ISOLATED, N_FAMILIES, POLYNOMIAL, DEGREE, GENUS, BASES, VALUATION_ON_BASES, WEIGHTS, N_ELEMENTS, FAR_VERTICES, CONES, POINTS, VERTICES, TIGHT_SPAN, MAXIMAL_POLYTOPES, MONOMIALS, COEFFICIENTS, MAXIMAL_CELLS, RAYS, N_RAYS, N_MAXIMAL_CELLS, FACETS, LINEAR_SPAN, CURVE_EDGE_LENGTHS, VISUAL},
  morekeywords=[3]{map, grep, join, secondary_cone},
  stringstyle=\color{stringcolor},
  keywordstyle=[1]\color{typecolor},
  keywordstyle=[2]\color{propcolor},
  keywordstyle=[3]\color{typecolor},
  numbers=none,
  captionpos=b,
  showspaces=false,
  showstringspaces=false,
  morestring=[b]",
  escapechar=&,
  frame=single,
}
\theoremstyle{definition}
\newtheorem{definition}{Definition}[section]
\newtheorem{example}[definition]{Example}
\newtheorem{remark}[definition]{Remark}
\newtheorem*{convention}{Convention}
\theoremstyle{plain}
\newtheorem{theorem}[definition]{Theorem}
\newtheorem{prop}[definition]{Proposition}
\newtheorem{conjecture}[definition]{Conjecture}
\title[Algorithms for Tight Spans and Tropical Linear Spaces]{Algorithms for\\Tight Spans and Tropical Linear Spaces}
\author{Simon Hampe \and Michael Joswig \and Benjamin Schröter}
\address{Technische Universität Berlin\\
Institut für Mathematik, Sekretariat MA 6-2\\
Straße des 17. Juni 136, 10623 Berlin}
\email{\{hampe,joswig,schroeter\}@math.tu-berlin.de}
\thanks{S.~Hampe receives support from DFG (Priority Program 1489, Grant JO 366/3-2).
  M.~Joswig is partially supported by Einstein Foundation Berlin and DFG (Priority Program 1489, Collaborative Research Centers TRR 109 and TRR 195).}
\subjclass[2010]{05B35 (05--04, 52B40, 14T05)}
\begin{document}

\begin{abstract}
  We describe a new method for computing tropical linear spaces and more general duals of polyhedral subdivisions.
  It is based on Ganter's algorithm (1984) for finite closure systems.
\end{abstract}

\maketitle

\section{Introduction}
\noindent
Tropical linear spaces are among the most basic objects in tropical geometry~\cite[Chapter 4]{Tropical+Book}.
In polyhedral geometry language they form polyhedral complexes which are dual to regular matroid subdivisions of hypersimplices.
Such subdivisions are characterized by the property that each cell is the convex hull of characteristic vectors of the bases of a matroid.
Here the hypersimplices correspond to the uniform matroids.
Research on matroid subdivisions and related objects goes back to Dress and Wenzel \cite{DressWenzel:1992} and to Kapranov \cite{Kapranov93}.
Speyer instigated a systematic study in the context of tropical geometry \cite{Speyer:2008}, while
suitable algorithms have been developed and implemented by Rinc\'on \cite{Rincon:2013}.

Here we present a new algorithm for computing tropical linear spaces, which is implemented in the software system \polymake \cite{DMV:polymake}.
Moreover, we report on computational experiments.
Our approach has two key ingredients.
First, our method is completely polyhedral --- in contrast with Rinc\'on's algorithm \cite{Rincon:2013}, which primarily rests on exploiting matroid data.
Employing the polyhedral structure has the advantage that this procedure naturally lends itself to interesting generalizations and variations.
In particular, this includes tropical linear spaces corresponding to non-trivially valuated matroids.
Second, our method fundamentally relies on an algorithm of Ganter \cite{Ganter:1984,GanterReuter:1991} for enumerating the closed sets in a finite closure system.
This procedure is a variant of breadth-first-search in the Hasse diagram of the poset of closed sets.
As a consequence the computational costs grow only linearly with the number of edges in the Hasse diagram, i.e., the number of covering pairs among the closed sets.
So this complexity is asymptotically optimal in the size of the output, and this is what makes our algorithm highly competitive in practice.
The challenge is to implement the closure operator and to intertwine it with the search in such a way that it does not impair the output-sensitivity.

Kaibel and Pfetsch employed Ganter's algorithm for enumerating face lattices of convex polytopes \cite{KaibelPfetsch:2002}, and this was later extended to bounded subcomplexes of unbounded polyhedra \cite{HerrmannJoswigPfetsch:2013}.
Here this is generalized further to arbitrary regular subdivisions and their duals.
Such a dual has been called \emph{tight span} in \cite{HerrmannJoswigSpeyer:2012} as it generalizes the tights spans of finite metric spaces studied by Isbell \cite{Isbell:1964} and Dress \cite{Dress:1984}.
The tight span of an arbitrary polytopal complex may be seen as a special case of the dual block complex of a cell complex; e.g., see \cite[\S64]{Munkres:1984}.
From a topological point of view subdivisions of point configurations are cell decompositions of balls, which, in turn, are special cases of manifolds with boundary.
The duality of manifolds with boundary is classically known as Lefschetz duality (e.g., see \cite[\S70]{Munkres:1984}), and this generalizes Poincar\'e duality as well as cone polarity.
With an arbitrary polytopal subdivision, $\Sigma$, we associate a new object, called the \emph{extended tight span} of $\Sigma$, which contains the tight span, but which additionally takes duals of certain boundary cells into account.
In general, the extended tight span is only a partially ordered set.
If, however, $\Sigma$ is regular, then the extended tight span can be equipped with a natural polyhedral structure.
We give an explicit coordinatization.
In this way tropical linear spaces arise as the extended tight spans of matroid subdivisions with respect to those boundary cells which correspond to loop-free matroids.
While a tropical linear space can be given several polyhedral structures, the structure as an extended tight span is the coarsest.
Algorithmically, this has the advantage of being the sparsest, i.e., being the one which takes the least amount of memory.
In this sense, this is the canonical polyhedral structure of a tropical linear space.

This paper is organized as follows.
We start out with recalling basic facts about general closure systems with a special focus on Ganter's algorithm \cite{Ganter:1984}.
Next we introduce the extended tight spans, and this is subsequently specialized to tropical linear spaces.
We compare the performance of Rinc\'on's algorithm \cite{Rincon:2013} with our new method.
To exhibit one application the paper closes with a case study on the $f$-vectors of tropical linear spaces.

\section{Closure Systems, Lower Sets and Matroids}
\noindent
While we are mainly interested in applications to tropical geometry, it turns out that it is useful to start out with some fundamental combinatorics.
This is the natural language for Ganter's procedure, which we list as Algorithm~\ref{algorithm:closed_sets} below.
\begin{definition}
  A \emph{closure operator} on a set $S$ is a function $\cl: \mathcal{P}(S) \to \mathcal{P}(S)$ on the power set of $S$, which fulfills the following axioms for all subsets $A,B\subseteq S$:
 \begin{enumerate}
 \item[(i)] $A \subseteq \cl(A)$ (Extensiveness).
 \item[(ii)] If $A \subseteq B$ then $\cl(A) \subseteq \cl(B)$ (Monotonicity).
 \item[(iii)] $\cl(\cl(A)) = \cl(A)$ (Idempotency).
 \end{enumerate}
 A subset $A$ of $S$ is called \emph{closed}, if $\cl(A) = A$.
 The set of all closed sets of $S$ with respect to some closure operator is called a \emph{closure system}.
\end{definition}
The closed sets of a closure system form a meet-semilattice.
Conversely, each meet-semilattice arises in this way.

Classical examples include the following.
If the set $S$ carries a topology then the function which sends any subset $A$ to the smallest closed set (defined as the complement of an open set) containing $A$ is a closure operator, called the \emph{topological closure}.
If the set $S$ is equipped with a group structure then the function which sends any subset $A$ to the smallest subgroup containing $A$ is a closure operator.
Throughout the following we are particularly interested in the case where the set $S=[n]$ is finite.

The closed sets of a closure system $(S,\cl)$ are partially ordered by inclusion.
The resulting poset is the \emph{closure poset} induced by $(S,\cl)$.
The \emph{Hasse diagram} of $(S,\cl)$ is the directed graph whose nodes are the closed sets and whose arcs correspond to the covering relations of the closure poset.
We assume that all arcs are directed upward, i.e., toward the larger set.
Ganter's Algorithm~\ref{algorithm:closed_sets} computes the Hasse diagram of a finite closure system; see \cite{Ganter:1984,GanterReuter:1991,GanterObiedkov:2016}.
As its key property each closed set is pushed to the queue precisely once, and this entails that the running time is linear in the number of edges of the Hasse diagram, i.e., the algorithm is \emph{output-sensitive}.

\begin{algorithm}
 \caption{Produces the Hasse diagram of a finite closure system.}\label{algorithm:closed_sets}
   \dontprintsemicolon
   \KwIn{A set $S$ and a closure operator $\cl$ on $S$}
   \KwOut{The Hasse diagram of $(S,\cl)$}
   $H$ $\leftarrow$ empty graph\;
   Queue $\leftarrow$ [cl($\emptyset$)]\;
   add node for closed set cl($\emptyset$) to $H$\;
   \While{Queue is not empty}{
   $N$ $\leftarrow$ first element of Queue, remove $N$ from Queue\;
      \ForAll{minimal $N_i :=  \cl(N \cup \{i\})$, where $i \in S \backslash N$}{
         \If{$N_i$ does not occur as a node in $H$ yet}{
            add new node for closed set $N_i$ to $H$\;
            add $N_i$ to Queue\;
         }
      add arc from $N$ to $N_i$ to $H$\;
      }
   }
\Return $H$\;
\end{algorithm}

\begin{example}\label{exmp:polytope}
  Based on Algorithm~\ref{algorithm:closed_sets}, Kaibel and Pfetsch \cite{KaibelPfetsch:2002} proposed a method to compute the face lattice of a convex polytope $P$. This can be done in two different ways:
  A face of a polytope can either be identified by its set of vertices or by the set of facets it is contained in.
  
  In the first case, the set $S$ is the set of vertices and the closure of a set is the smallest face containing this set.
  
  In the second case, the set $S$ is the set of facets. Let $F \subseteq S$. The intersection of the facets in $F$ is a face $Q_F$ of $P$. The closure of $F$ is defined as the set of all facets which contain $Q_F$. Note that, with this approach, Algorithm \ref{algorithm:closed_sets} actually computes the face lattice with inverted relations.
  
  In both cases, the closed sets are exactly the faces of $P$ and the closure operator is given in terms of the vertex--facet incidences.
\end{example}

\begin{example}\label{example:fan}
 Instead of polytopes, one can also compute the face lattice of a polyhedral fan in much the same manner. The crucial problem is to define the closure of a set of rays which is not contained in any cone. The solution to this is to extend the set $S$ to contain not only all rays, but also an additional artificial element, say $\infty$. Now the closure of a set $F \subseteq S$ is either the smallest cone containing it, if it exists, or the full set $S$. In particular, this ensures that the length of a maximal chain in the face lattice of a $k$-dimensional fan is always $k+1$.
\end{example}

The following class of closure systems is ubiquitous in combinatorics and tropical geometry.
The monographs by White~\cite{White:1986} and Oxley~\cite{Oxley:2011} provide introductions to the subject.
\begin{definition}\label{def:matroid}
  Let $S$ be a finite set equipped with a closure operator $\cl: \mathcal{P}(S) \to \mathcal{P}(S)$.
  The pair $(S,\cl)$ is a \emph{matroid} if the following holds in addition to the closure axioms:
  \begin{enumerate}
  \item[(iv)] If $A\subseteq S$ and $x\in S$, and $y\in\cl(A\cup\{x\})\setminus\cl(A)$, then $x\in\cl(A\cup\{y\})$ (MacLane--Steinitz Exchange).
  \end{enumerate}
\end{definition}
This is one of many ways to define a matroid; see \cite[Lemma~1.4.3]{Oxley:2011} for explicit cryptomorphisms.
The closed sets of a matroid are called \emph{flats}.
\begin{remark}
  For matroids it is not necessary to check for the minimality of the closed sets $N_i$ in Algorithm~\ref{algorithm:closed_sets}.
  In view of Axiom~(iv) this is always satisfied. This application of the algorithm also demonstrates that, while the empty set is typically closed, this does not always need to be the case. In fact, the closure of the empty set in a matroid is the set of all \emph{loops}.
\end{remark}

For our applications it will be relevant to consider special closure systems which are derived from other closure systems in the following way.
A \emph{lower set} $\Lambda$ of the closure system $(S,\cl)$ is a subset of the closed sets such that for all pairs of closed sets with $A\subseteq B$ we have that $B\in \Lambda$ implies $A\in \Lambda$.
\begin{prop}\label{prop:lower}
  Let $(S,\cl)$ be a closure system with lower set $\Lambda$.
  Then the function $\cl_\Lambda$ which is defined by
  \begin{equation}\label{eq:cl_Lambda}
    \cl_\Lambda(A) \ = \begin{cases} \cl(A) & \text{if $\cl(A)\in\Lambda$} \enspace,\\ S & \text{otherwise} \end{cases}
  \end{equation}
  is a closure operator on $S$.
\end{prop}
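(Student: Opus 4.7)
The plan is to verify each of the three closure axioms for $\cl_\Lambda$ by a case distinction on whether $\cl(A)$ lies in $\Lambda$. The only nonobvious input beyond the original axioms for $\cl$ is the defining property of the lower set: $B\in\Lambda$ and $A\subseteq B$ with both closed imply $A\in\Lambda$.

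For extensiveness, if $\cl(A)\in\Lambda$ then $\cl_\Lambda(A)=\cl(A)\supseteq A$ by extensiveness of $\cl$; otherwise $\cl_\Lambda(A)=S\supseteq A$ trivially. For monotonicity, suppose $A\subseteq B$. If $\cl_\Lambda(B)=S$ the inclusion $\cl_\Lambda(A)\subseteq\cl_\Lambda(B)$ is immediate, so assume $\cl(B)\in\Lambda$. Then monotonicity of $\cl$ gives $\cl(A)\subseteq\cl(B)$, and since $\Lambda$ is a lower set and $\cl(A)$, $\cl(B)$ are both closed, we conclude $\cl(A)\in\Lambda$. Hence $\cl_\Lambda(A)=\cl(A)\subseteq\cl(B)=\cl_\Lambda(B)$.

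For idempotency, split again on the value of $\cl_\Lambda(A)$. If $\cl_\Lambda(A)=\cl(A)\in\Lambda$, then applying $\cl_\Lambda$ again we compute $\cl(\cl(A))=\cl(A)\in\Lambda$ by idempotency of $\cl$, so $\cl_\Lambda(\cl_\Lambda(A))=\cl(A)=\cl_\Lambda(A)$. If instead $\cl_\Lambda(A)=S$, then $\cl_\Lambda(S)=S$ in either sub-case: either $S=\cl(S)\in\Lambda$ and we get $\cl_\Lambda(S)=\cl(S)=S$, or $\cl(S)\notin\Lambda$ and we get $\cl_\Lambda(S)=S$ directly from \eqref{eq:cl_Lambda}.

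There is no real obstacle; the argument is a routine case analysis and the lower-set hypothesis enters exactly once, in the monotonicity step, to ensure that the two regimes $\cl(\cdot)\in\Lambda$ and $\cl(\cdot)\notin\Lambda$ interact correctly with the inclusion order. The only mild subtlety to flag is that $S$ itself need not belong to $\Lambda$, so the second clause of \eqref{eq:cl_Lambda} can be triggered even when evaluating $\cl_\Lambda$ at $S$; this is precisely why the definition returns $S$ in that clause rather than $\cl(\cdot)$, and it is what makes idempotency go through cleanly.
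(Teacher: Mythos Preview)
Your proof is correct and follows essentially the same approach as the paper: a case analysis on whether the relevant closure lies in $\Lambda$, with the lower-set property invoked precisely in the monotonicity step. The paper's version is terser (it declares extensiveness and idempotency obvious and, in monotonicity, cases on $\cl(A)$ rather than on $\cl(B)$), but the substance is identical.
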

\begin{proof}
  Extensiveness and idempotency are obvious.
  We need to show that monotonicity holds.
  To this end consider two closed sets $A\subseteq B\subseteq S$.
  Suppose first that $A$ lies in the lower set $\Lambda$.
  Then $\cl_\Lambda(A)=\cl(A)\subseteq\cl(B)\subseteq\cl_\Lambda(B)$.
  If, however, $A\not\in\Lambda$, then $B\not\in\Lambda$ as $\Lambda$ is a lower set.
  In this case we have $\cl_\Lambda(A)=S=\cl_\Lambda(B)$.
\end{proof}

\begin{example}
  An unbounded convex polyhedron is \emph{pointed} if it does not contain any affine line.
  In that case the polyhedron is projectively equivalent to a convex polytope, with a marked face, the \emph{face at infinity}; see, e.g., \cite[Theorem~3.36]{Polyhedral+Methods}.
  So we arrive at the situation where we have a convex polytope $P$ with a marked face $F$.
  Now the set of faces of $P$ which do not intersect $F$ trivially forms a lower set $\Lambda$ in the closure system of faces of $P$.
  In this way combining Example~\ref{exmp:polytope} with Proposition~\ref{prop:lower} and Algorithm~\ref{algorithm:closed_sets} gives a method to enumerate the bounded faces of an unbounded polyhedron.
  Ignoring the entire set $S$, which is closed with respect to $\cl_\Lambda$ but not bounded, recovers the main result from \cite{HerrmannJoswigPfetsch:2013}.
\end{example}

\begin{example}
  For a $d$-polytope $P$ and $k\leq d$ the faces of dimension at most $k$ form a lower set.
  This is the \emph{$k$-skeleton} of $P$.
\end{example}

The closure operators from all examples in this section are implemented in \polymake \cite{DMV:polymake}.

\section{Extended Tight Spans}\label{sec:tight_spans}
\noindent
It is the goal of this section to describe duals of polytopal complexes in terms of closure systems.
Via Algorithm~\ref{algorithm:closed_sets} this gives means to deal with them effectively.
For details on polyhedral subdivisions we refer to the monograph \cite{Triangulations}.

Let $\cP\subset\R^d$ be a finite point configuration, and let $\Sigma$ be a polytopal subdivision of $\cP$.
That is, $\Sigma$ is a polytopal complex whose vertices lie in $\cP$ and which covers the convex hull $P = \conv  \cP$.
We call the elements of $\Sigma$ \emph{cells}; the set of maximal cells is denoted by $\Sigma^{\max}$ and the maximal boundary facets (meaning the maximal cells of $\Sigma$ contained in the facets of $P$) by $\Delta_\Sigma$.
Now we obtain a closure operator on the set $S_\Sigma := \Sigma^{\max} \cup \Delta_\Sigma$ by letting
\begin{equation}
 \cl^\Sigma(F) \ := \   
  \begin{cases}
    \emptyset & \textnormal{if } F=\emptyset \enspace, \\
    \left\{ g \in S_\Sigma\;\middle\vert\; \bigcap_{\sigma \in F} \sigma \subseteq g\right\} & \textnormal{otherwise}\enspace.
  \end{cases}
\end{equation}
for any $F\in S_\Sigma$.
Note that $\cl^\Sigma$ is basically the same as the dual operator in Example \ref{exmp:polytope}.
In fact, the closed, non-empty sets in $S_\Sigma$ correspond to the cells of $\Sigma$, while the poset relation is the inverse containment relation.

Now let $\Gamma$ be a collection of boundary faces of $\Sigma$. This defines a lower set $\Lambda_\Gamma$ for the closure system $(S_\Sigma, \cl^\Sigma)$, which consists of all sets $F$, such that $\bigcap_{\sigma \in F} \sigma \nsubseteq \tau$ for any $\tau \in \Gamma$. We will denote the corresponding closure operator by $\cl^\Sigma_\Gamma := \cl^\Sigma_{\Lambda_\Gamma}$ and we call the resulting closure system $(S_\Sigma, \cl^\Sigma_\Gamma)$ the \emph{extended tight span} of $\Sigma$ with respect to $\Gamma$. 

If $\Gamma = \Delta_\Sigma$, the closed sets are all cells of $\Sigma$ which are not contained in the boundary.
This is exactly the \emph{tight span} of a polytopal subdivision defined in \cite{HerrmannJensenJoswigSturmfels:2009}, which is dual to the interior cells.
Note that this can also be obtained as the closure system of $(\Sigma^{\max},\cl^\Sigma)$.

\begin{example}
   Let $\cP$ be $\{-1,0,1\}$ with the convex hull $P = [-1,1]$, and let $\Sigma$ be its subdivision into intervals $[-1,0]$ and $[0,1]$. The subdivision and its corresponding extended tight spans for $\Gamma = \emptyset$ and $\Gamma = \Delta(\Sigma)$ can be seen in Figure \ref{figure:interval}. This example also demonstrates that we need to declare the closure of the empty set to be itself to ensure monotonicity.
 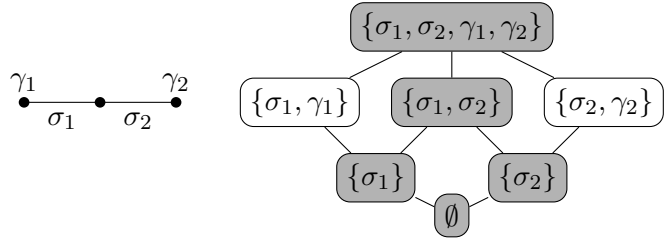
\begin{figure}
  \centering
  \begin{tikzpicture}
   \newcommand*{\stepup}{1}
   \newcommand*{\stepright}{2}
   \matrix[column sep=15pt]{
    \draw (-1,0) node[above] {$\gamma_1$} -- (1,0) node[above]{$\gamma_2$};
    \fill[black] (-1,0) circle (2pt);
    \fill[black] (0,0) circle (2pt);
    \fill[black] (1,0) circle (2pt);
    \node at (-.5,0)[below] {$\sigma_1$};
    \node at (.5,0)[below] {$\sigma_2$};
   &
   \node[rounded corners =5pt,draw,fill=light_gray] (zero) at (0,-1.5*\stepup) { $\emptyset$};
   \node[rounded corners =5pt,draw,fill=light_gray] (s1) at (-0.5*\stepright,-1*\stepup) { $\{\sigma_1\}$};
   \node[rounded corners =5pt,draw,fill=light_gray] (s2) at (0.5*\stepright,-1*\stepup) { $\{\sigma_2\}$};
   \node[rounded corners =5pt,draw] (s1g1) at (-1*\stepright,0) { $\{\sigma_1,\gamma_1\}$};
   \node[rounded corners =5pt,draw,fill=light_gray] (s1s2) at (0,0) { $\{\sigma_1,\sigma_2\}$};
   \node[rounded corners =5pt,draw] (s2g2) at (1*\stepright,0) { $\{\sigma_2,\gamma_2\}$};
   \node[rounded corners =5pt,draw,fill=light_gray] (full) at (0,1*\stepup) { $\{\sigma_1,\sigma_2,\gamma_1,\gamma_2\}$};
   \foreach \x in {(s1g1),(s1s2),(s2g2)}
      {\draw \x -- (full);}
   \draw (s1) -- (s1g1);
   \draw (s1) -- (s1s2);
   \draw (s2) -- (s1s2);
   \draw (s2) -- (s2g2);
   \draw (zero) -- (s1);
   \draw (zero) -- (s2);
  \\
  };
  \end{tikzpicture}
 \caption{A regular subdivision and its extended tight span for $\Gamma = \emptyset$ and $\Gamma = \Delta(\Sigma) = \{\gamma_1,\gamma_2\}$, respectively. The latter is marked in gray.}\label{figure:interval}
 \end{figure}

\end{example}

If the subdivision is regular, i.e., induced by a height function $h: \cP \to \R$, we can actually coordinatize the extended tight span. Any regular subdivision with fixed height function is dual to a \emph{dual complex} $N_\Sigma$, which is a complete polyhedral complex in $\R^d$. More precisely, for every point $x \in \R^d$ there is a cell of $\Sigma$, consisting of all points $p \in \cP$ which minimize $h(p) - p \cdot x$. Points which induce the same cell form an open polyhedral cell and the topological closures of these cells form the dual complex. It is a well-known fact that there is a bijective, inclusion-reversing relation between the cells of $\Sigma$ and the cells of $N_\Sigma$.

In particular, each maximal cell of $\Sigma^{\max}$ is dual to a vertex and every boundary facet in $\Delta(\Sigma)$ is dual to a ray of $N_\Sigma$. Hence, every closed set $F$ of $(S_\Sigma,\cl^\Sigma_\Gamma)$ corresponds to a polyhedral cell $\rho_F$ and together these cells form a subcomplex of $N_\Sigma$. More precisely, we denote by
\begin{equation}
T_{\Sigma,\Gamma} := \left\{ \rho_F \;\middle\vert\;F \subsetneq S_\Sigma \textnormal{ closed w.r.t.\ } \cl^\Sigma_\Gamma\right\}
\end{equation}
the \emph{coordinatized extended tight span} of $\Sigma$ with respect to $\Gamma$. Its face lattice is by definition the poset of closed sets of $(S_\Sigma,\cl^\Sigma_\Gamma)$.

\section{Tropical Linear Spaces}\label{sec:tropli}
\noindent
In this section we will finally investigate the objects that we are most  interested in: valuated matroids and tropical linear spaces.
We prefer to see the latter as special cases of extended tight spans.
Valuated matroids were first studied by Dress and Wenzel \cite{DressWenzel:1992}; see \cite[Chapter 4]{Tropical+Book} for their role in tropical geometry.

Let us introduce some notation.
For a subset $B$ of $[n]$ of size $r$, let $e_B := \sum_{i\in B} e_i$.
For a collection $M \subseteq \binom{[n]}{r}$ of such subsets we let
\begin{equation}
P_M \ := \ \conv\{e_B \mid B \in M\}
\end{equation}
be the subpolytope of the hypersimplex $\Delta(r,n)$ which is spanned by those vertices which correspond to elements in $M$.
In this language matroids were characterized by Gel’fand, Goresky, MacPherson and Serganova \cite{GelfandEtAl:1987} as follows.
\begin{prop}\label{prop:matroid}
  The set $M$ comprises the bases of a matroid if and only if the vertex--edge graph of the polytope $P_M$ is a subgraph of the vertex--edge graph of $\Delta(r,n)$ or, equivalently, if every edge of $P_M$ is parallel to $e_i - e_j$ for some $i$ and~$j$.
\end{prop}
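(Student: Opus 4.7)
The plan is to prove the two implications separately. First, observe that the two formulations of the edge condition coincide: since the coordinates of $e_A-e_B$ for $r$-subsets $A,B$ lie in $\{-1,0,1\}$, a nonzero vector $\lambda(e_i-e_j)$ can equal $e_A-e_B$ only for $\lambda=\pm 1$, which forces $|A\triangle B|=2$, i.e., exactly the edges of the ambient hypersimplex $\Delta(r,n)$. So it suffices to prove that $M$ is matroidal if and only if every edge of $P_M$ has direction $e_i-e_j$.

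For the forward implication, assume $M$ is the basis set of a matroid and let $[e_A,e_B]$ be an edge of $P_M$. I would argue by contradiction, supposing $|A\triangle B|\ge 4$. Pick $i\in A\setminus B$; symmetric basis exchange (a standard consequence of the matroid axioms) yields $j\in B\setminus A$ such that both $A':=(A\setminus\{i\})\cup\{j\}$ and $B':=(B\setminus\{j\})\cup\{i\}$ are in $M$. The assumption $|A\triangle B|\ge 4$ forces $A',B'\notin\{A,B\}$, while the identity $e_{A'}+e_{B'}=e_A+e_B$ shows that $[e_A,e_B]$ and $[e_{A'},e_{B'}]$ share the same midpoint. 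Any supporting hyperplane of $P_M$ cutting out the edge $[e_A,e_B]$ must then contain $e_{A'}$ and $e_{B'}$, contradicting that the face it carves out is one-dimensional with only $e_A$ and $e_B$ as vertices.

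For the reverse implication, I assume every edge of $P_M$ is parallel to some $e_i-e_j$ and verify the basis exchange axiom: given $A,B\in M$ and $i\in A\setminus B$, I produce $j\in B\setminus A$ with $(A\setminus\{i\})\cup\{j\}\in M$. The tangent cone of $P_M$ at $e_A$ is generated by its edge directions, and by hypothesis every such direction has the form $e_j-e_k$ with $k\in A$, $j\notin A$, and $(A\setminus\{k\})\cup\{j\}\in M$ (this being the other endpoint of the edge). Writing
\[
e_B-e_A \ = \ \sum_\alpha \lambda_\alpha\,(e_{j_\alpha}-e_{k_\alpha}),\qquad \lambda_\alpha\ge 0,
\]
I would read off the coordinates: for $k\notin A\cup B$ the left side vanishes and only terms with $j_\alpha=k$ contribute, forcing $\lambda_\alpha=0$ whenever $j_\alpha\notin A\cup B$; similarly, coordinates in $A\cap B$ give $\lambda_\alpha=0$ whenever $k_\alpha\in A\cap B$; and the $i$-th coordinate reads $\sum_{k_\alpha=i}\lambda_\alpha=1$. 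Combined, every $\alpha$ with $\lambda_\alpha>0$ satisfies $k_\alpha\in A\setminus B$ and $j_\alpha\in B\setminus A$, and the last equality guarantees such an $\alpha$ with $k_\alpha=i$. The corresponding edge $e_{j_\alpha}-e_i$ then delivers the required swap.

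The main obstacle is the reverse direction: merely knowing that every edge of $P_M$ has direction $e_i-e_j$ does not obviously single out an edge at $e_A$ that removes the \emph{prescribed} element $i$, and a naive simplex-style descent from $e_A$ toward $e_B$ need not begin with such a step. The tangent-cone decomposition turns the existence question into a linear-algebraic one, and the sign pattern imposed by the coordinates of $e_B-e_A$ pins down the desired exchange.
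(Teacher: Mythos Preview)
The paper does not actually prove this proposition; it is quoted as a classical result of Gel'fand, Goresky, MacPherson and Serganova, with a reference to~\cite{GelfandEtAl:1987}.  There is therefore no ``paper's own proof'' to compare against.

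Your argument is correct.  The forward direction via the symmetric exchange property and the midpoint identity $e_{A'}+e_{B'}=e_A+e_B$ is the standard one and matches what one finds in the literature.  The reverse direction---writing $e_B-e_A$ in the tangent cone at $e_A$ as a nonnegative combination of edge directions and then reading off coordinate constraints---is clean and complete: the coordinates outside $A\cup B$ kill the terms with $j_\alpha\notin B$, the coordinates in $A\cap B$ kill the terms with $k_\alpha\in A\cap B$, and the $i$-th coordinate then forces an edge direction $e_{j_\alpha}-e_i$ with $j_\alpha\in B\setminus A$ and positive coefficient, giving exactly the exchange.  One small point worth making explicit (you use it implicitly): the tangent cone of a polytope at a vertex is generated by the \emph{edge} directions at that vertex, not merely by all $e_C-e_A$; this is standard but is the place where the edge hypothesis actually enters.
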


Throughout the following, let $M$ be (the set of bases of) a matroid on $n$ elements.
In that case $P_M$ is the \emph{matroid polytope} of $M$.
The matroid $M$ is said to be \emph{loop-free} if $\bigcup_{B \in M} B = [n]$.
The \emph{rank} of $M$ is $r$, the size of any basis.
If $P_M$ is the full hypersimplex, then $M = U_{r,n}$ is a \emph{uniform matroid}.
The above description fits well with our geometric approach.
Any function $v: M \to \R$ gives rise to a regular subdivision on $P_M$, which we denote by $\Sigma_{M,v}$.
The pair $(M,v)$ is a \emph{valuated matroid} if every cell of $\Sigma_{M,v}$ is again a matroid polytope.
Then $\Sigma_{M,v}$ is called a \emph{matroid subdivision}.

\begin{example}
  The set $M$ of subsets of $\{1,2,3,4\}$ with exactly two elements has cardinality six.
  Their characteristic vectors are the vertices of a regular octahedron embedded in $4$-space.
  If we let $v$ be the map which sends five vertices to $0$ and the sixth one to $1$, then $(M,v)$ is a valuated matroid.
\end{example}

We will define tropical linear spaces as duals of valuated matroids.
To this end let $(M,v)$ be a valuated matroid of rank $r$ on $n$ elements.
For a vector $x \in \R^n$, we define the set
\begin{equation}
 M_x \ := \ \left\{ B \in M \;\middle\vert\; v(B) - e_B \cdot x \textnormal{ is minimal}\right\}\enspace. 
\end{equation}

From the definition of the dual complex in Section \ref{sec:tight_spans} we see that the elements of $M_x$ correspond to a cell of $\Sigma_{M,v}$ and thus define a matroid.
Note that for any $\lambda \in \R$ we clearly have $M_x = M_{x + \lambda \1}$.

\begin{definition}
  The \emph{tropical linear space} associated with the valuated matroid $(M,v)$ is the set
  \begin{equation}
   B(M,v) \ := \ \left\{ x \in \R^n\;\mid\; M_x \textnormal{ is loop-free}\right\}/\R\1\enspace.
  \end{equation}

\end{definition}

Note that our definition of a valuated matroid, as well as that of a tropical linear space are with respect to minimum as tropical addition. 
The following is our main result.
While it is easy to prove, it is relevant since it entails a new effective procedure for enumerating the cells of a tropical linear space via Algorithm~\ref{algorithm:closed_sets}.
\begin{theorem}
  Let $\Gamma$ be the set of boundary faces of $\Sigma := \Sigma_{M,v}$ which correspond to matroids with loops.
  Then
  \begin{equation}
   B(M,v) \ = \ T_{\Sigma,\Gamma}/\R\1\enspace,
  \end{equation}
where $T_{\Sigma,\Gamma}$ is the coordinatized extended tight span defined in Section \ref{sec:tight_spans}.
\end{theorem}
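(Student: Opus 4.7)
The plan is to identify points of $T_{\Sigma,\Gamma}$ and of $B(M,v)$ pointwise in $\R^n$, reducing the equality to a combinatorial statement about loops of the matroid $M_x$.

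For $x \in \R^n$, the description of the dual complex $N_\Sigma$ from Section~\ref{sec:tight_spans} places $x$ in the relative interior of the cell $\rho_{F_x}$, where $F_x := \{g \in S_\Sigma : P_{M_x} \subseteq g\}$ is the closed set of $\cl^\Sigma$ corresponding to the cell $P_{M_x} = \conv\{e_B : B \in M_x\}$ of $\Sigma$. Since $\Lambda_\Gamma$ is a lower set, $T_{\Sigma,\Gamma}$ is closed under taking faces, and therefore $x \in T_{\Sigma,\Gamma}$ will be equivalent to $F_x \in \Lambda_\Gamma$, i.e., to $P_{M_x}$ being contained in no $\tau \in \Gamma$. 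The quotient by $\R\1$ on both sides is intrinsic: since $P_M$ lies in the hyperplane $\sum_j x_j = r$, both $x \mapsto M_x$ and $N_\Sigma$ are invariant under $x \mapsto x + \lambda\1$. The theorem then reduces to the equivalence, for every $x$: $M_x$ is loop-free if and only if $P_{M_x}$ lies in no boundary facet of $\Sigma$ whose matroid has a loop.

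The forward direction will follow from monotonicity of loops: if $P_{M_x} \subseteq \tau$, then $M_x$ sits inside $M_\tau$ as a set of bases, so any loop of $M_\tau$ is missing from every basis of $M_\tau$, hence from every basis of $M_x$, and is therefore a loop of $M_x$; contrapositively, $M_x$ loop-free forces $M_\tau$ loop-free. For the converse, I will assume $i$ is a loop of $M_x$ and produce a suitable $\tau$. Since every basis of $M_x$ omits $i$, we have $P_{M_x} \subseteq \{x_i = 0\}$, and the bases in $M_x \subseteq M$ witness that $i$ is not a coloop of $M$. The classical facet description of matroid polytopes then makes $\{x_i = 0\} \cap P_M$ a facet of $P_M$, so $P_{M_x}$ lies in this facet and hence in some $\tau \in \Delta_\Sigma$ sitting inside it; every vertex of $\tau$ is then in $\{x_i = 0\}$, so $i$ is a loop of $M_\tau$ and $\tau \in \Gamma$. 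I expect this converse direction to be the main obstacle, as it is the only step that invokes content external to the closure system framework of the paper, namely the facet theory of matroid polytopes.
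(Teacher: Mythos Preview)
Your approach is essentially the paper's: both reduce the theorem to the combinatorial equivalence ``$M_x$ is loop-free $\iff$ $P_{M_x}$ is not contained in any boundary face of $\Sigma$ corresponding to a matroid with loops,'' and the paper simply asserts this equivalence in one sentence while you spell out both implications. Your forward direction is fine.

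There is one small inaccuracy in the converse. You assert that $\{x_i=0\}\cap P_M$ is a \emph{facet} of $P_M$ whenever $i$ is not a coloop; this fails in general (for instance, for $M=U_{2,3}$ the face $\{x_3=0\}\cap P_M$ is a single vertex). Consequently your conclusion ``hence in some $\tau\in\Delta_\Sigma$ sitting inside it'' with $M_\tau$ having a loop is not justified: if the face $\{x_i=0\}\cap P_M$ is not a facet, the maximal boundary cell $\tau\in\Delta_\Sigma$ you land in need not satisfy $\tau\subseteq\{x_i=0\}$, so $i$ need not be a loop of $M_\tau$. The fix is immediate once you remember that in the theorem $\Gamma$ consists of \emph{all} boundary faces with loops, not only those in $\Delta_\Sigma$ (cf.\ the paper's remark that one \emph{may} replace $\Gamma$ by the maximal ones afterwards). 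Assuming $i$ is not a loop of $M$ itself, the hyperplane $\{x_i=0\}$ meets $P_M$ in a proper face, so $P_{M_x}$ lies in $\partial P_M$; being a cell of $\Sigma$ in the boundary, $P_{M_x}$ is a boundary face, and since $i$ is a loop of $M_x$ it already belongs to $\Gamma$. Taking $\tau=P_{M_x}$ finishes the argument without any appeal to the facet description of matroid polytopes.
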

\begin{proof}
 Let $N_\Sigma$ denote the dual complex of $\Sigma$. From our definition it is immediately clear that $B(M,v)$ is a subcomplex of $N_\Sigma / \R\1$. 
 It consists of all cells whose dual cell in $\Sigma$ is the polytope of a loop-free matroid. Since any cell in $\Sigma$ corresponds to a loop-free matroid, if and only if it is not contained in a boundary facet of a matroid with loops, the claim follows.
\end{proof}
We call the resulting polyhedral structure of $B(M,v)$ \emph{canonical}.
\begin{remark}
 Note that one can naturally replace $\Gamma$ by the subset of \emph{maximal} boundary faces corresponding to matroids with loops. These faces are defined by the equations $x_i = 0$ for $i \in [n]$.
\end{remark}
\begin{example}
  If the valuation is constant then the matroid subdivision is trivial.
  It follows that the dual complex coincides with the normal fan of the matroid polytope $P_M$.
   In this case $B(M,v)$ is the \emph{Bergman fan} of $M$, in its coarsest possible subdivision; see \cite{HampeThesis2014} for a proof.
\end{example}
The polyhedral complex $B(M,v)$ reflects quite a lot of the combinatorics of the matroid $M$.
For instance, the rank of $M$ equals $\dim(B(M,v)) + 1$.
If $L$ is the lineality space of $B(M,v)$, then the number of connected components of $M$ is $\dim(L) +1$; see \cite{FeichtnerSturmfels:2005}.

\subsection{Performance comparison}\label{subsec:performance}
As mentioned in the introduction, there is an algorithm by Rinc\'{o}n \cite{Rincon:2013} for computing Bergman fans, i.e., tropical linear spaces with trivial valuation.
An extension which can also deal with trivially valuated arbitrary matroids which may not be realizable has been implemented in \polymake's bundled extension \texttt{a-tint} \cite{hatint}.
It is this implementation we refer to in the following discussion.
The original software \texttt{TropLi} by Rinc\'on only takes realizable matroids as input.

\begin{table}[th]
   \centering
   \caption{Comparing running times for computing Bergman fans.}
   \label{table:bergman}
   \renewcommand{\arraystretch}{0.9}
   \begin{tabular*}{0.95\linewidth}{@{\extracolsep{\fill}}l@{\hspace{0.2cm}}r@{\hspace{0.7cm}}rr@{\hspace{0.7cm}}rr@{}}
   \toprule
      $(n,r)$ & $\#$ matroids & Rinc\'{o}n & Hasse & CH & ETS\\
   \midrule
   (6,2) & 23 & 0.0 & 0.2 & 0.8 & 0.0 \\
      (6,3) & 38 & 0.0 & 0.4 & 1.6 & 0.0 \\[3pt]
   (7,2) & 37 & 0.0 & 0.3 & 1.6 & 0.0 \\
      (7,3) & 108 & 0.0 & 1.5 & 5.8 & 0.2 \\[3pt]
   (8,2) & 58 & 0.0 & 0.4 & 1.9 & 0.0 \\
   (8,3) & 325 & 0.3 & 6.0 & 21.4 & 0.8 \\
   (8,4) & 940 & 1.8 & 48.7 & 86.5 & 9.2 \\
   \bottomrule
 \end{tabular*}
\end{table}

Rinc\'{o}n's and our algorithm are very difficult to compare for two reasons.
First of all, a matroid can be encoded in numerous ways. 
For instance, in terms of closures, as in Definition~\ref{def:matroid}, or in terms of bases, as in Proposition~\ref{prop:matroid}.
Many further variants exist, and the conversion between these representations is often a non-trivial computational task. 
Below we will assume that all matroids are given in terms of their bases. 
The second problem is that the two algorithms essentially compute very different things. 
Our algorithm computes the full face lattice of the canonical polyhedral structure of a tropical linear space. 
On the other hand, Rinc\'{o}n's algorithm only computes the rays and the maximal cones of the Bergman fan, albeit in a finer subdivision. 
In this setup it is therefore to be expected that our approach is significantly slower. 
In particular, to identify the boundary cells (including the loopfree ones), we need to apply a convex hull algorithm to the matroid polytope before we can make use of our algorithm.
Still, the discussion has merit when separating the timings for the different steps; see Table \ref{table:bergman}.
We compute Bergman fans of all (isomorphism classes of) matroids of a given rank $r$ on a given ground set $[n]$ as provided at \url{http://www-imai.is.s.u-tokyo.ac.jp/~ymatsu/matroid}); see also \cite{Matsumotoetal:2012}.
Each matroid is given only in terms of its bases. 
We first apply Rinc\'{o}n's algorithm and then compute the Hasse diagram of the face lattice of the fan as described in Example~\ref{example:fan}. 
For our approach we split the computations into two steps: 
First we compute the convex hull of the matroid polytope, displayed under \enquote{CH} and then measure the running time of our closure algorithm \enquote{ETS} (extended tight span) separately. 
Times were measured on an AMD Phenom II X6 1090T with 3.6 GHz using a single thread and \polymake version 3.1.
We employed the double description method implemented in the \texttt{Parma Polyhedral Library} (via \polymake's interface) for computing the convex hulls \cite{PPL}.

The results show that almost all of the time in our algorithm is spent computing the facets of the matroid polytope. 
On the other hand, if one aims at obtaining the same amount of information, i.e., the full face lattice, for Rinc\'{o}n's algorithm, this increases the computation time dramatically. This demonstrates that the finer subdivision produced by this algorithm is significantly worse in terms of complexity than the canonical subdivision.

We also like to point out that for non-trivial valuations our algorithm is, to the best of our knowledge, currently the only feasible method for computing tropical linear spaces.

\section{A case study on \texorpdfstring{$f$}{f}-vectors of tropical linear spaces}
\noindent
Throughout the rest of this paper we will restrict ourselves to valuations of uniform matroids.
Equivalently, we study matroid subdivisions of hypersimplices (and their lifting functions).
Speyer was the first to conduct a thorough study of the combinatorics of tropical linear spaces \cite{Speyer:2008}.
He conjectured the following.
\begin{conjecture}[Speyer's $f$-vector conjecture]\label{con:speyer}
 Let $n \geq 1$ and $1 \leq r \leq n$. Let $v$ be any valuation on $U_{r,n}$. Then the number of $(i-1)$-dimensional bounded faces of $B(U_{r,n},v)$ is at most $\binom{n-2i}{r-i} \binom{n-i-1}{i-1}$.
\end{conjecture}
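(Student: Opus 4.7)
The plan is to recast the count in terms of the matroid subdivision $\Sigma := \Sigma_{U_{r,n},v}$ and then attempt an inductive argument on $i$, falling back on tropical intersection theory if that fails.

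First, by the duality between $\Sigma$ and its dual complex used in the main theorem, each $(i-1)$-dimensional bounded face of $B(U_{r,n},v)$ corresponds to a loop-free interior cell of $\Sigma$ of dimension $n-i$, i.e., to the matroid polytope $P_N$ of a loop-free matroid $N$ of rank $r$ on $[n]$ occurring in $\Sigma$. For loop-free $N$ one has $\dim P_N = n - c(N)$, where $c(N)$ is the number of connected components; hence the relevant cells are precisely those for which $N$ decomposes into exactly $i$ connected components. Thus the conjecture is equivalent to the purely combinatorial statement that the number of such cells in $\Sigma_{U_{r,n},v}$ is bounded by $\binom{n-2i}{r-i}\binom{n-i-1}{i-1}$, regardless of $v$. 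I would first settle the base case $i = 1$, which asks that the maximal cells of $\Sigma$ associated with connected loop-free matroids number at most $\binom{n-2}{r-1}$; refining $v$ to a generic valuation produces a matroid triangulation of $\Delta(r,n)$ and reduces this to counting the minimal matroid polytopes that can appear as full-dimensional cells of such a triangulation.

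The inductive step from $i$ to $i+1$ should proceed via a link argument. For each $(i-1)$-face $\sigma$ of $B(U_{r,n},v)$, corresponding to a loop-free matroid $N_\sigma$ of corank $i$, the star of $\sigma$ in the tropical linear space should carry the structure of the tropical linear space of a valuated uniform-like quotient of $N_\sigma$, with an induced valuation inherited from $v$. A recursion of this shape would convert the target bound into a telescoping product: the factor $\binom{n-i-1}{i-1}$ accounts for the ways of extending an $(i-1)$-face to an $i$-face, while $\binom{n-2i}{r-i}$ reflects the effective rank and ground-set size of the contracted valuated matroid appearing in the link. Summing over all $(i-1)$-faces and applying the $i = 1$ bound to each link would then close the induction.

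The main obstacle is that cells of $\Sigma$ need not themselves be polytopes of uniform matroids, so the link of a face is in general a valuated matroid on a smaller ground set whose rank and loop-structure must be tracked very carefully to obtain the stated binomial coefficients and not something weaker. Should this direct combinatorial approach fail, the backup plan is tropical-intersection-theoretic: realize the $(i-1)$-faces as the weighted stable intersection of $B(U_{r,n},v)$ with $i$ sufficiently generic tropical hyperplanes and bound the resulting degree. For realizable valuations this degree agrees with a Pl\"ucker-type intersection number on the Grassmannian $\mathrm{Gr}(r,n)$, where classical Schubert calculus gives the bound. The real difficulty, and what I expect to be the crux of the conjecture, is to perform the corresponding computation for non-realizable valuated matroids, where the algebro-geometric input must be replaced by a Hodge- or Chow-theoretic argument on valuated matroid fans.
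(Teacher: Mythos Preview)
The paper does not prove this statement. It is stated as a \emph{conjecture} (Speyer's $f$-vector conjecture, originating in \cite{Speyer:2008}) and remains open; the paper's contribution in this section is purely computational, enumerating $f$-vectors of tropical linear spaces for $\Dr(3,8)$ and checking that the conjectured bound is met (and attained) in those cases. There is therefore no proof in the paper for your proposal to be compared against.

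As a research plan your outline is reasonable in spirit but has genuine gaps. The base case $i=1$ is already part of the open conjecture: bounding the number of full-dimensional cells in a matroid subdivision of $\Delta(r,n)$ by $\binom{n-2}{r-1}$ is not known in general, and ``refining $v$ to a generic valuation'' does not reduce it to a finite check, since one must still control \emph{all} finest matroid subdivisions. Your inductive link argument correctly identifies its own obstruction---the link of a cell is the Bergman fan (or tropical linear space) of a matroid that is typically not uniform---and no method is known for propagating the uniform bound through such non-uniform links. Your backup via tropical intersection theory and Schubert calculus is essentially the route Speyer himself took: in later work he established the conjecture for valuations realizable in characteristic zero using the $K$-theory of the Grassmannian, but extending that argument to arbitrary (non-realizable) valuated matroids is precisely the open problem. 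So your proposal accurately locates the crux, but does not resolve it.
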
 

To study this problem, one is naturally interested in some form of moduli space of all possible valuations on $U := U_{r,n}$.
This role is played by the \emph{Dressian} $\Dr(r,n)$; see \cite{HerrmannJensenJoswigSturmfels:2009,HerrmannJoswigSpeyer:2012}.
It is a subfan of the secondary fan of $P_U = \Delta(r,n)$, consisting of all cones which correspond to matroidal subdivisions.
As a set it contains the \emph{tropical Grassmannian} $\TGr_p(r,n)$ introduced by Speyer and Sturmfels \cite{SpeyerSturmfels:2004} for any characteristic $p$.
This is the tropicalization of the ordinary complex Grassmannian, and it consists of all cones of the secondary fan which correspond to \emph{realizable} valuations on $U$, i.e., those which can be realized as valuated vector matroids in characteristic zero \cite[Chapter 4]{Tropical+Book}.
However, this inclusion is generally strict.
In fact, the Dressian is not even pure in general.

\begin{remark}
  For $r = 2$, the Dressian $\Dr(r,n)$ is equal to the tropical Grassmannian.
  Combinatorially, this is the \emph{space of phylogenetic trees}; e.g., see \cite[\S1.3]{Kapranov93} and \cite[\S4.3]{Tropical+Book}. 
  For $r = 3$ and $3 \leq n \leq 6$, the equality $\Dr(r,n) = \TGr_p (r,n)$ still holds on the level of sets for each $p\geq 0$. This is trivial for $n = 3,4$, as there are no non-trivial subdivisions of $P_U$ in that case. For $n = 5$ it follows from duality and the statement for $\Dr(2,5)$. The Dressian $\Dr(3,6)$ was computed in \cite{SpeyerSturmfels:2004}. Note that, while the Dressian and the Grassmannian may agree as sets, they can have different polyhedral structures. Understanding the precise relation between these structures is still an open problem for general parameters.
  The cases $(3,7)$ and $(3,8)$ are the first where the Dressian differs from the Grassmannian. 
  The Dressian $\Dr(3,7)$ was computed in \cite{HerrmannJensenJoswigSturmfels:2009}. 
  In particular, the possible combinatorial types of the corresponding tropical planes (and thus, their possible $f$-vectors) were listed. 
  The polyhedral structure of $\Dr(3,8)$ was computed in \cite{HerrmannJoswigSpeyer:2012}.
\end{remark}

\subsection{The Dressian \texorpdfstring{$\Dr(3,8)$}{Dr(3,8)}}

We wish to compute $f$-vectors of uniform tropical planes in $\R^8/\R\1$, i.e., tropical linear spaces corresponding to valuations on $U_{3,8}$. To this end, we make use of the data obtained in \cite{HerrmannJoswigSpeyer:2012}, which is available at \url{http://svenherrmann.net/DR38/dr38.html}. There is a natural $S_8$-symmetry on the Dressian and the web page provides representatives for each cone orbit. 

We computed tropical linear spaces for each cone by choosing an interior point as valuation. 
%
%
For the sake of legibility, we only include results for the maximal cones of the Dressian. There are 14 maximal cones of dimension nine and 4734 maximal cones of dimension eight. The full data can be obtained at \url{http://page.math.tu-berlin.de/~hampe/dressian38.php}.

\begin{convention}
  The $f$-vector of a tropical linear space $L$ is the $f$-vector of its canonical polyhedral structure.
  The \emph{bounded $f$-vector} of $L$ is the $f$-vector of the bounded part of this structure. All counts are given modulo the $S_8$-symmetry on the Dressian.
\end{convention}

There is only one bounded $f$-vector $(n-2, n-3)$ for a tropical linear space that corresponds to a maximal cone in the Dressian $\Dr(2,n)$, since this linear space has the combinatorics of a binary tree with $n$ labeled leaves.
The generic tropical linear spaces in the Dressian $\Dr(3,6)$ have a bounded $f$-vector which is either $(5, 4, 0)$ or $(6, 6, 1)$; see \cite{HerrmannJensenJoswigSturmfels:2009}.
In the case of $(3,7)$ the (generic) bounded $f$-vectors read $(7,6,0)$, $(9, 10, 2)$ and $(10, 12, 3)$.

\begin{theorem}
 Every generic tropical plane in $\R^8/\R\1$ has one of four possible $f$-vectors:
 \begin{itemize}
  \item[$\rhd$] If it corresponds to a nine-dimensional cone in the Dressian, its $f$-vector is $(13, 55, 63)$ and its bounded $f$-vector is $(13,15,3)$. There are nine different combinatorial types of such planes; see Figure \ref{figure:combinatorial_types}.
  \item[$\rhd$] If it corresponds to one of the 4734 eight-dimensional maximal cones in the Dressian, there are three possibilities:
  \begin{itemize}
   \item[$\circ$] There are 51 planes with $f$-vector $(13, 56, 64)$ and bounded $f$-vector $(13, 16, 4)$.
   \item[$\circ$] There are 1079 planes with $f$-vector $(14, 58, 65)$ and bounded $f$-vector $(14, 18, 5)$.
   \item[$\circ$] There are 3604 planes with $f$-vector $(15, 60, 66)$ and bounded $f$-vector $(15, 20, 6)$.
  \end{itemize}
  There are 3013 different combinatorial types of such planes.
 \end{itemize}
\end{theorem}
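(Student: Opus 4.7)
The statement is essentially a computational enumeration, so my plan is to exhibit an explicit computation that covers every orbit of maximal cones of $\Dr(3,8)$ under the $S_8$-action, compute the $f$-vector and bounded $f$-vector of the associated tropical plane for each orbit representative, and then tally the results. The correctness of the plan rests on the theorem of Section~\ref{sec:tropli} together with the fact that, within the relative interior of a single cone of the Dressian, the induced matroid subdivision $\Sigma_{U_{3,8},v}$ is constant, hence so are the canonical polyhedral structure of $B(U_{3,8},v)$ and its $f$-vector.

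First I would import the cone representatives of $\Dr(3,8)$ from the data accompanying \cite{HerrmannJoswigSpeyer:2012}, keeping only the $14$ maximal cones of dimension nine and the $4734$ maximal cones of dimension eight. For each such cone $C$ I would choose a rational interior point $v_C$ (for example the sum of the ray generators of $C$), which then serves as a valuation on the uniform matroid $U_{3,8}$. Since $v_C$ is interior, the matroid subdivision $\Sigma_{U_{3,8},v_C}$ is the one indexed by $C$, and the resulting tropical linear space is generic in the sense of the theorem. I would then invoke the \polymake implementation of Algorithm~\ref{algorithm:closed_sets} with closure operator $\cl^\Sigma_\Gamma$, where $\Gamma$ is the collection of maximal boundary facets corresponding to matroids with loops (as in the remark following the theorem of Section~\ref{sec:tropli}); this produces the face lattice, and hence the $f$-vector, of $T_{\Sigma,\Gamma}/\R\1 = B(U_{3,8},v_C)$. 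The bounded $f$-vector is read off by restricting to those closed sets whose corresponding cells of $N_\Sigma$ are bounded, equivalently those closed sets $F$ that contain no element dual to a ray.

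Next I would partition the resulting tropical planes into combinatorial types. Two planes are of the same combinatorial type if their face lattices are isomorphic as labeled posets (after identifying maximal cells with their underlying bases of $U_{3,8}$). Since the $S_8$-action has already been factored out at the level of Dressian cones by \cite{HerrmannJoswigSpeyer:2012}, the remaining task is to test isomorphism among the $14+4734$ computed lattices and bucket them by $f$-vector; within a given $f$-vector, lattice isomorphism can be certified by a canonical relabeling, or more pragmatically by comparing invariants (vertex degree sequences, bounded subcomplex, etc.) and verifying non-isomorphism by a direct search when tie-breaking is needed. The count of $9$ combinatorial types in the top-dimensional case and $3013$ types in dimension eight is what this step must confirm, matching the claimed breakdown $51 + 1079 + 3604 = 4734$.

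The main obstacle is not conceptual but computational: one has to make sure that the convex-hull step of Section~\ref{subsec:performance}, applied to the matroid subpolytopes spanned by the bases selected by $v_C$, scales to thousands of instances, and that the $S_8$-orbit bookkeeping is carried out faithfully so that no combinatorial type is double-counted or missed. Everything else reduces to executing the pipeline produced in Sections~\ref{sec:tight_spans} and \ref{sec:tropli}, reading off the output, and presenting it in the table given in the statement.
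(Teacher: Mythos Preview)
Your proposal is correct and follows essentially the same approach as the paper: the theorem is a report on a direct computation, carried out by importing the orbit representatives of maximal cones of $\Dr(3,8)$ from \cite{HerrmannJoswigSpeyer:2012}, choosing an interior valuation for each, computing the tropical linear space via the extended tight span closure system of Sections~\ref{sec:tight_spans}--\ref{sec:tropli}, and tabulating $f$-vectors and combinatorial types. One small imprecision: ``combinatorial type'' here refers to the isomorphism class of the face poset as an \emph{unlabeled} polyhedral complex (cf.\ Figure~\ref{figure:combinatorial_types}), not as a labeled poset in the sense you describe; this is why the $14$ nine-dimensional cone orbits collapse to $9$ types.
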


The maximal bounded $f$-vector $(15,20,6)$ agrees with the upper bound in Conjecture~\ref{con:speyer}.

\begin{figure}[ht]
 \centering
  \begin{tikzpicture}[x={(.8cm,-.2cm)}, y={(0.5cm,0.3cm)},z={(0cm,.7cm)}]
  \coordinate (B) at (0,0,0);
  \coordinate (B1) at (-.5,-.5,-1);
  \coordinate (B2) at (-.5,.5,-1);
  \coordinate (B3) at (.5,-.5,-1);
  \coordinate (B4) at (.5,.5,-1);
  \coordinate (T) at (0,0,2);
  \coordinate (LT) at (-1,0,2);
  \coordinate (LTS) at (-1.5,0,2.5);
  \coordinate (LBS) at (-1.5,0,-.5);
  \coordinate (LB) at (-1,0,0);
  \coordinate (MT) at (1,0,2);
  \coordinate (MB) at (1,0,0);
  \coordinate (MTS) at (1.5,0,2.5);
  \coordinate (MBS) at (1.5,0,-.5);
  \coordinate (MTS) at (2,0,2.5);
  \coordinate (RT) at (0,1,2);
  \coordinate (RB) at (0,1,0);
  \coordinate (RTS) at (0,1.5,2.5);
  \coordinate (RBS) at (0,1.5,-.5);
  \coordinate (PS) at (1.5,0,1);
  
  \newcommand\typeline[2]{
  \draw (#1) -- (#2);
  \fill[black] (#2) circle (2pt);
  }
  \newcommand\bottomline{
  \typeline{0,0,0}{-.5,-.5,-1};
  \typeline{0,0,0}{-.5,.5,-1};
  \typeline{0,0,0}{.5,-.5,-1};
  \typeline{0,0,0}{.5,.5,-1};
  \fill[white,draw=black] (0,0,0) circle (2pt);
  }
  \newcommand\righttriangle{
    \draw[fill=gray, fill opacity = 0.4] (B) -- (T) -- (RT) -- (B);
    \draw (RT) -- (RTS);
    \fill[black] (B) circle (2pt);
    \fill[black] (T) circle (2pt);
    \fill[black] (RT) circle (2pt);
    \fill[black] (RTS) circle (2pt);
  }
  \newcommand\lefttriangle{
    \draw[fill=gray, fill opacity = 0.4] (B) -- (T) -- (LT) -- (B);
    \draw (LT) -- (LTS);
    \fill[black] (B) circle (2pt);
    \fill[black] (T) circle (2pt);
    \fill[black] (LT) circle (2pt);
    \fill[black] (LTS) circle (2pt);
  }
  \newcommand\middletriangle{
    \draw[fill=gray, fill opacity = 0.4] (B) -- (T) -- (MT) -- (B);
    \draw (MT) -- (MTS);
    \fill[black] (B) circle (2pt);
    \fill[black] (T) circle (2pt);
    \fill[black] (MT) circle (2pt);
    \fill[black] (MTS) circle (2pt);
  }
  \newcommand\lowsquare{
    \draw[fill=gray, fill opacity = 0.4] (B) -- (T) -- (MT) -- (MB) -- (B);
    \draw (MB) -- (MBS);
    \fill[black] (B) circle (2pt);
    \fill[black] (T) circle (2pt);
    \fill[black] (MT) circle (2pt);
    \fill[black] (MB) circle (2pt);
    \fill[black] (MBS) circle (2pt);
  }
  \newcommand\highsquare{
    \draw[fill=gray, fill opacity = 0.4] (B) -- (T) -- (MT) -- (MB) -- (B);
    \draw (MT) -- (MTS);
    \fill[black] (B) circle (2pt);
    \fill[black] (T) circle (2pt);
    \fill[black] (MT) circle (2pt);
    \fill[black] (MB) circle (2pt);
    \fill[black] (MTS) circle (2pt);
  }
  \newcommand\plainrightsquare{
    \draw[fill=gray, fill opacity = 0.4] (B) -- (T) -- (RT) -- (RB) -- (B);
     \fill[black] (B) circle (2pt);
    \fill[black] (T) circle (2pt);
    \fill[black] (RT) circle (2pt);
    \fill[black] (RB) circle (2pt);
  }
  \newcommand\plainleftsquare{
    \draw[fill=gray, fill opacity = 0.4] (B) -- (T) -- (LT) -- (LB) -- (B);
     \fill[black] (B) circle (2pt);
    \fill[black] (T) circle (2pt);
    \fill[black] (LT) circle (2pt);
    \fill[black] (LB) circle (2pt);
  }
  \newcommand\pentagon{
    \draw[fill=gray, fill opacity = 0.4] (B) -- (T) -- (MT) -- (PS) -- (MB) -- (B);
    \fill[black] (B) circle (2pt);
    \fill[black] (T) circle (2pt);
    \fill[black] (MT) circle (2pt);
    \fill[black] (PS) circle (2pt);
    \fill[black] (MB) circle (2pt);
  }

  \matrix[column sep = 8pt, row sep = 8pt] {
  \lowsquare
  \lefttriangle
  \righttriangle
  \bottomline &
  \highsquare
  \lefttriangle
  \righttriangle
  \bottomline & 
  \lowsquare
  \plainrightsquare
  \lefttriangle 
  \bottomline
  \\
  \node (0,0,0) {S$_*$TT}; &
  \node (0,0,0) {S$^*$TT}; &
  \node (0,0,0) {S$_*$ST}; \\
  \highsquare
  \plainleftsquare
  \righttriangle
  \bottomline & 
  \highsquare
  \plainrightsquare
  \plainleftsquare
  \bottomline &
  \lowsquare
  \plainleftsquare
  \plainrightsquare 
  \bottomline \\
  \node (0,0,0) {S$^*$ST}; &
  \node (0,0,0) {S$^*$SS}; &
  \node (0,0,0) {S$_*$SS}; \\
  \lefttriangle
  \righttriangle
  \pentagon
  \bottomline &
  \pentagon
  \plainrightsquare
  \lefttriangle
  \bottomline & 
  \plainleftsquare
  \plainrightsquare
  \pentagon
  \bottomline \\
  \node (0,0,0) {PTT}; &
  \node (0,0,0) {PST}; &
  \node (0,0,0) {PSS};
  \\
  };
 \end{tikzpicture}
  \caption{The various combinatorial types of bounded parts of tropical linear spaces corresponding to nine-dimensional cones in the Dressian.
    Note that all of them share the same $f$-vector $(13,15,3)$.
    The naming convention is P = pentagon, S  = square, T = triangle.
    The star $*$ indicates where the square has an additional edge attached.}\label{figure:combinatorial_types}
\end{figure}

\begin{remark}
   Each of the nine different combinatorial types that correspond to a nine-dimensional cone contain a vertex (marked in white in Figure \ref{figure:combinatorial_types}), which in turn corresponds to the matroid polytope of a parallel extension of the \emph{Fano matroid}. This is a certificate that these tropical linear spaces are not realizable over any field of characteristic greater than two; see \cite[Chapter 6 and Appendix]{Oxley:2011}.
   Figure~\ref{fig:fano_ex} illustrates the connected extensions of the Fano matroid; these are those that are loop-free.
\end{remark}

Further computer experiments reveal the following details.
\begin{prop}
   Let $p$ be $0$, $3$, $5$ or $7$.
   Then the intersection of the relative interior of a cone $C$ in the Dressian $\Dr(3,8)$ with the tropical Grassmannian $\TGr_p(3,8)$ is trivial if and only if a subdivision which is induced by a lifting in the relative interior of $C$ contains the polytope of a Fano matroid extension as a cell.
\end{prop}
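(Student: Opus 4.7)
The proposition has a clean conceptual half and a heavy computational half. For the ``only if'' direction, I would argue as follows. Suppose the generic subdivision of $C$ has a cell whose matroid polytope is that of a one-element extension $F'$ of the Fano matroid $F_7$, and suppose further that some $v$ in the relative interior of $C$ lies in $\TGr_p(3,8)$. Then $(U_{3,8},v)$ is realized by a $3\times 8$ matrix $A$ over a valued field $K$ of residue characteristic $p$, and restricting $A$ to the columns indexing the cell (with parallel columns, loop columns, etc.\ as prescribed by $F'$) gives a realization of $F'$ over $K$. Since $F'$ contains $F_7$ as a minor and $F_7$ is realizable only in characteristic $2$, no such $A$ can exist for $p\in\{0,3,5,7\}$. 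Hence $C^\circ\cap\TGr_p(3,8)=\emptyset$.

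For the ``if'' direction I would proceed computationally, leveraging the toolchain already used in the paper. Starting from the $S_8$-invariant description of $\Dr(3,8)$ obtained in \cite{HerrmannJoswigSpeyer:2012}, I would enumerate the orbit representatives of all cones $C$ (not only the maximal ones), compute the generic matroid subdivision of each via Algorithm~\ref{algorithm:closed_sets}, and discard those cones whose cells include a polytope of a one-element extension of $F_7$. For every surviving cone I would then exhibit, for each of the four characteristics $p\in\{0,3,5,7\}$, an interior valuation together with an explicit $3\times 8$ matrix over the appropriate Puiseux series field whose tropicalized Pl\"ucker vector agrees with the chosen valuation modulo $\R\mathbf{1}$. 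Existence of such a lift reduces to solving the Pl\"ucker ideal of $\mathrm{Gr}(3,8)$ over a field of characteristic $p$ with prescribed initial data, a task within reach of a computer-algebra backend coupled to \polymake.

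\textbf{Main obstacle.} The hard part is the ``if'' direction. Although the $S_8$-symmetry reduces the number of orbits dramatically, realizability must be certified independently in each of four characteristics, and a parametrization that realizes a matroid in characteristic zero can degenerate modulo a small prime, so the search needs to branch through alternative lifts. A further subtle step is confirming that each exhibited lift lands in the \emph{relative interior} of the intended cone $C$, rather than on its boundary or in a neighbouring cone; this requires comparing secondary cones, which is itself a non-trivial polyhedral computation. Once these checks are carried out, the list of orbits surviving the Fano test must coincide exactly with the list of orbits admitting a characteristic-$p$ realization for every $p\in\{0,3,5,7\}$, and the two directions of the equivalence line up.
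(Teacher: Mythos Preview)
Your overall strategy matches the paper's: the proposition is presented there as the outcome of ``further computer experiments'', with no written proof, so the substantive content is exactly the kind of exhaustive check over the cones of $\Dr(3,8)$ that you outline for the hard direction. In that sense your plan is on target.

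Two issues are worth fixing. First, your labels are inverted: in the biconditional ``intersection trivial $\Leftrightarrow$ Fano-extension cell present'', the direction you argue conceptually (Fano cell present $\Rightarrow$ not realizable) is the \emph{if} direction, and the computational realizability certificates establish the \emph{only if} direction.

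Second, and more substantively, your conceptual argument for the easy direction is not quite right as stated. A Fano extension on eight elements uses all eight columns, so there is no ``restricting $A$ to the columns indexing the cell''. Moreover, the matrix $A$ over $K$ realizes the \emph{uniform} matroid $U_{3,8}$ (every $3\times 3$ minor is nonzero), not $F'$. The correct mechanism is to pass to the initial degeneration: after scaling the columns of $A$ by suitable powers of the uniformizer (the tropical torus action picking out the cell), the reduction of $A$ modulo the maximal ideal is a $3\times 8$ matrix over the \emph{residue field} whose matroid is exactly the cell matroid $F'$. Since the residue field has characteristic $p\in\{0,3,5,7\}$ and $F'$ has $F_7$ as a restriction, this contradicts the fact that $F_7$ is realizable only in characteristic~$2$. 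You clearly have the right intuition (you mention residue characteristic), but the argument as written conflates $K$ with its residue field and misdescribes how the cell matroid arises.
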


\begin{figure}[ht]
 \centering
   \begin{tikzpicture}[
  edge/.style={line width=1.5pt, line cap=round, black},
  vertex/.style={draw=black, fill=black, line width=0pt},
  new_vertex/.style={draw=black, fill=white, line width=1pt},
  scale=1.2
  ]

  \coordinate (6) at (0,0); 
  \coordinate (0) at (0,2);
  \coordinate (1) at (1.732,-1);
  \coordinate (3) at (-1.732,-1);
  
  \coordinate (2) at ($(0)!0.5!(1)$);  
  \coordinate (4) at ($(0)!0.5!(3)$);
  \coordinate (5) at ($(1)!0.5!(3)$);
  
  \coordinate (e0) at (-1.7,1.7);
  \coordinate (e1) at (0);
  \coordinate (e2) at ($(1)!0.5!(2)$);
  \coordinate (e3) at ($(4)!0.7!(5)$);
    
  
  \draw[draw=black, line width=1.5pt] (6) circle (1);
  \draw[edge] (0) -- (1);
  \draw[edge] (0) -- (3);
  \draw[edge] (0) -- (5);
  \draw[edge] (1) -- (3);
  \draw[edge] (1) -- (4);
  \draw[edge] (2) -- (3);
  
  \draw[new_vertex] (e1) circle (2.2pt);
  \draw[new_vertex] (e2) circle (2.2pt);
  \draw[new_vertex] (e3) circle (2.2pt);
 
  \draw[vertex] (0) circle (0.8pt);
  \draw[vertex] (1) circle (2pt);
  \draw[vertex] (2) circle (2pt);
  \draw[vertex] (3) circle (2pt);
  \draw[vertex] (4) circle (2pt);
  \draw[vertex] (5) circle (2pt);
  \draw[vertex] (6) circle (2pt); 
 
\end{tikzpicture}
  \caption{The three loop-free extensions of the Fano matroid.\label{fig:fano_ex}}
\end{figure}
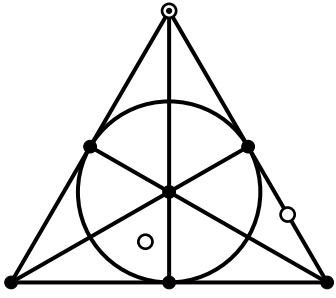

\section{Outlook}

\subsection{Higher Dressians}

We have given an algorithm which computes tropical linear spaces for arbitrary valuations in reasonable time; computing all tropical linear spaces for $\Dr(3,8)$ above only took a few hours on a standard personal computer. 
This indicates that it is feasible to apply this algorithm more ambitiously, e.g., to Dressians with larger parameters.
However, in these cases not much data is currently available.
Computing higher Dressians is a challenging task in itself.

The next step would be to look at $\Dr(4,8)$.
While computing the full Dressian is, at the moment, beyond our means, we can consider the following construction by Speyer \cite{Speyer:2008}. Let $M$ be a matroid of rank $r$ on $n$ elements. We define an associated valuation on $U_{r,n}$ by
\begin{equation}
 v_M(B) := r - \rank_M(B)\enspace,
\end{equation}
where $B \in \binom{[n]}{r}$ is a basis of the uniform matroid and 
\begin{equation}
 \rank_M(B) = \max_{B' \in M} \{ \lvert B \cap B'\rvert\}
\end{equation}
is the rank of $B$ in $M$. Speyer showed that the corank indeed defines a valuation and that the matroid polytope $P_M$ appears as a cell in the induced regular subdivision.

There are 940 isomorphism classes of matroids of rank four on eight elements \cite{Matsumotoetal:2012}; our computation is based on the data from \url{http://www-imai.is.s.u-tokyo.ac.jp/~ymatsu/matroid}.
For computing the tropical linear spaces given by the valuations defined above we employed the enriched version available at \url{db.polymake.org}.
This is certainly not enough to provide a global view on the Dressian $\Dr(4,8)$, but it gives us a first glimpse of relevant combinatorial features.
There are 62 different bounded $f$-vectors of such tropical linear spaces, so we cannot list them all. Also, up to combinatorial isomorphism, there are 465 different subdivisions of the hypersimplex induced by these matroids.
As an example, consider the matroid $M := U_{1,2}^{\oplus 4}$; see \cite[Chapter 4.2]{Oxley:2011} for more on direct sums of matroids. The bounded $f$-vector of the tropical linear space induced by $v_M$ is $(14,24,12,1)$. 
In particular, the last two entries already achieve the respective maxima conjectured by Speyer, which are $(20,30,12,1)$. 
Experiments suggests that this is generally true, i.e., if $M = U_{1,2}^{\oplus d}$, then the valuation on $U_{d,2d}$ gives a linear space whose bounded $f$-vector maximizes the last two entries.
Among valuations of the form $v_M$ on $U_{4,8}$, the maximal number of edges is in fact also 24.
However, the maximal number of vertices is 15. This is achieved by the unique matroid with 56 bases and 14 hyperplanes, i.e., flats of rank three. For experts: This is a \emph{sparse paving} matroid, which has the maximal number 16 of \emph{cyclic flats} among all matroid of rank four on eight elements.

\subsection{Further optimization}

Many of the objects considered here, such as polytopes, fans and matroids, exhibit symmetries which are also visible in the corresponding closure systems. It seems desirable, therefore, to exploit this during the computation. For every orbit of a closed set, only one representative would be computed. In a first approach, this could be achieved by considering equivalent sets to be the same in Algorithm \ref{algorithm:closed_sets}: Once, when collecting all minimal closures $\cl(N \cup \{i\})$ and again when checking if $N_i$ is already in the graph.
One could then easily recover the list of all closed sets in the end, though reconstructing the full poset structure (i.e.\ without symmetry) would require significant computational work.

As mentioned in \ref{subsec:performance}, the most expensive part in our computations is a convex hull algorithm for computing the subdivision and the facets of the matroid polytope. It is known that the facets can be described in terms of the combinatorics of the matroid \cite{FeichtnerSturmfels:2005}. 
It is unclear if such a description can be given for the regular subdivision.

\bibliographystyle{alpha}
\bibliography{References}

\end{document}